\theoremstyle{plain} 
\newtheorem{thm}{Theorem}
\newtheorem{cor}[thm]{Corollary}
\newtheorem{lem}[thm]{Lemma}
\newtheorem{prop}[thm]{Proposition}
\theoremstyle{definition}
\newtheorem{defn}[thm]{Definition}
\newtheorem{ex}[thm]{Example}
\newtheorem{conj}[thm]{Conjecture}
\theoremstyle{remark}
\newtheorem*{pff}{Proof of Theorem \ref{free}}
\newtheorem*{pfa}{Proof of Theorem \ref{ARC}}
\def\m{\mathfrak m}
\def\p{\mathfrak p}
\def\q{\mathfrak q}
\def\P{\mathfrak P}
\def\Hom{\operatorname{Hom}}
\def\lHom{\operatorname{\underline{Hom}}}
\def\Ext{\operatorname{Ext}}
\def\lExt{\mathrm{\widehat{Ext}}\mathrm{}}
\def\arc{{\sf (ARC)}}
\begin{document}
%\thispagestyle{empty}

%%% ƒ^ƒCƒgƒ‹%%%%%%%%%%%%%%%%%%%%%%%%%%%%%%%%%%%%%%%%%%%%%%%%%%%%
\title{The Auslander-Reiten conjecture for Gorenstein rings}
\author{Tokuji Araya}
\address{Nara University of Education, Takabatake-cho, Nara-city 630-8528, Japan}
\email{araya@math.okayama-u.ac.jp}
\keywords{Auslander-Reiten conjecture, Gorenstein ring, stable cohomology module}
\subjclass[2000]{13H10, 13D02, 13D07}

\begin{abstract}
The Nakayama conjecture is one of the most important conjectures in ring theory.
The Auslander-Reiten conjecture is closely related to it.
The purpose of this note is to show that if the Auslander-Reiten conjecture holds in codimension one for a commutative Gorenstein ring $R$, then it holds for $R$.
\end{abstract}
\maketitle
%%%%%%%%%%%%%%%%%%%%%%%%%%%%%%%%%%%%%%%%%%%%%%%%%%%%%%%%%%%%%
\section{Introduction}

Over thirty years ago, Auslander and Reiten \cite{AR} conjectured the following, which is called {\em the generalized Nakayama conjecture}.

\begin{conj}
Let $\Lambda$ be an artin algebra.
Then any indecomposable injective $\Lambda$-module appears as a direct summand in the minimal injective resolution of $\Lambda$.
\end{conj}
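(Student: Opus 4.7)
The plan is to reformulate the conjecture in terms of vanishing of $\Ext$ and then attempt to rule out the bad case by homological arguments. Recall that for an artin algebra $\Lambda$, the indecomposable injective $\Lambda$-modules are precisely the injective envelopes $I(S)$ of simple modules $S$, and in the minimal injective resolution $0\to\Lambda\to I^0\to I^1\to\cdots$ the multiplicity of $I(S)$ in $I^i$ equals $\dim_{\operatorname{End}(S)}\Ext^i_\Lambda(S,\Lambda)$. Hence the conjecture is equivalent to the assertion that for every simple $\Lambda$-module $S$ there exists some $i\ge 0$ with $\Ext^i_\Lambda(S,\Lambda)\neq 0$. I would attempt to prove this by contradiction: suppose $S$ is a simple module with $\Ext^i_\Lambda(S,\Lambda)=0$ for all $i\ge 0$, and try to derive an impossibility.

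The first step is to study the full subcategory $\mathcal{X}\subseteq\mod\Lambda$ of finitely generated modules $M$ satisfying $\Ext^i_\Lambda(M,\Lambda)=0$ for every $i\ge 0$. This class is closed under extensions, direct summands, and syzygies; by Auslander--Bridger duality one also obtains information about $M$ via the stable cohomology of its transpose $\tr M$. I would then try to combine this homological input with the existence of almost split sequences and the Auslander--Reiten translate to reach a contradiction, for example by extracting from a putative simple $S\in\mathcal{X}$ another nonzero module in $\mathcal{X}$ of strictly smaller composition length, or by constructing an infinite descending chain in a setting where such a chain must terminate.

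The principal obstacle is that $\mathcal{X}$ is not known to be the zero class: the statement $\mathcal{X}=\{0\}$ is itself the strong Nakayama conjecture, and it is closely tied to the finitistic dimension conjecture, both of which remain open. No purely homological manipulation is known to extract a simple submodule of $M\in\mathcal{X}$ that again lies in $\mathcal{X}$, so the strategy stalls at precisely the step where one needs to descend to the simple case. In the commutative Gorenstein setting treated later in this paper, the duality furnished by a dualizing module together with the machinery of stable cohomology modules allows one to propagate vanishing from codimension one to higher codimensions; extending that strategy to an arbitrary artin algebra, where no Gorenstein hypothesis is available, is the central difficulty that would need to be overcome, which is why the paper targets the Gorenstein reduction rather than the conjecture in its original generality.
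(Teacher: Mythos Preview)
The statement you are addressing is presented in the paper as a \emph{conjecture}, not as a theorem; the paper offers no proof of it and does not claim to. There is therefore nothing in the paper to compare your proposal against. The paper's actual contribution is Theorem~\ref{ARC}, a reduction result for the related Auslander--Reiten conjecture over commutative Gorenstein rings, proved via stable cohomology and a local-duality argument; the generalized Nakayama conjecture itself is introduced only as background.

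Your write-up is not a proof either, and you are candid about this: you correctly reformulate the conjecture as the assertion that no simple module $S$ satisfies $\Ext^i_\Lambda(S,\Lambda)=0$ for all $i\ge 0$, and you correctly observe that the natural descent strategy stalls because showing the class $\mathcal{X}$ is zero is precisely the (open) strong Nakayama conjecture. So there is no hidden gap to point out beyond the one you have already identified. What would strengthen the exposition is to drop the framing of a ``proof proposal'' and present it instead as a reformulation together with an explanation of the known obstructions; as written it reads like an attempt that you yourself concede cannot be completed.
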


They showed that the above conjecture holds for all artin algebras if and only if the following conjecture holds for all artin algebras.

\begin{conj}\label{ausrei}
Let $\Lambda$ be an artin algebra and $M$ a finitely generated $\Lambda $-module.
If $\Ext ^{i}_{\Lambda }(M,M\oplus \Lambda ) = 0$ for all $i>0$, then $M$ is projective.
\end{conj}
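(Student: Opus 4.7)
The plan is to attack Conjecture~\ref{ausrei} by the standard reduction-and-induction strategy built on Auslander-Reiten theory, with the honest caveat that this is a long-standing open problem and any proposal must identify where the genuine obstruction appears.

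First I would make the usual reductions. Decomposing $\Lambda$ into blocks and passing to its basic algebra via Morita equivalence, I may assume $\Lambda$ is basic and connected. Replacing $M$ by the sum of one copy of each of its distinct non-projective indecomposable summands, it suffices to show that under the hypothesis this reduced module vanishes, i.e.\ that every indecomposable $M$ with $\Ext^{i}_{\Lambda}(M, M \oplus \Lambda) = 0$ for all $i > 0$ is projective. The goal is then to derive a contradiction from the assumption that such an $M$ is non-projective.

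Second, I would combine the two halves of the hypothesis. The vanishing $\Ext^{i}_{\Lambda}(M, \Lambda) = 0$ for $i > 0$ makes $M$ totally reflexive (Gorenstein-projective), so it admits a complete projective resolution; every syzygy $\Omega^{k} M$ (positive and negative) is non-projective and retains the Ext-vanishing against $\Lambda$. Combined with $\Ext^{i}_{\Lambda}(M, M) = 0$ for $i > 0$, this upgrades to vanishing of the Tate cohomology $\widehat{\Ext}^{i}_{\Lambda}(M, M) = 0$ for all $i \in \mathbb{Z}$. On the other hand, the Auslander-Reiten formula applied to the almost split sequence $0 \to \tau M \to E \to M \to 0$ translates $\Ext^{1}_{\Lambda}(M, M) = 0$ into the statement that every homomorphism $M \to \tau M$ factors through an injective, and the analogous statement holds for all the $\tau$- and $\Omega$-translates of $M$. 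I would then try to iterate this information along both the complete resolution and the $\tau$-orbit of $M$, aiming to produce either a periodicity that forces a projective summand into $M$, or an infinite family of pairwise stably-Ext-orthogonal indecomposables that exhausts the AR-quiver component.

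The hard part, and the reason the conjecture has stood for over thirty years, is closing the final step without an auxiliary finiteness hypothesis on $\Lambda$ such as finite representation type, finite Gorenstein dimension, or finite representation dimension. The combined Ext-rigidity is powerful in any finite range but is not known to propagate far enough along a complete resolution to exclude all non-projective totally reflexive $M$ for a completely general artin algebra; every existing positive result imposes some such finiteness assumption. My realistic expectation is that this plan succeeds under such a hypothesis—recovering, for instance, the commutative Gorenstein reduction that is the actual contribution of the present paper—while the main obstacle to the full conjecture is the construction of a replacement for that finiteness assumption in the general artin algebra setting.
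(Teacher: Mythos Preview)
The statement you were asked to prove is Conjecture~\ref{ausrei}, and the paper contains no proof of it: it is recorded precisely as an open conjecture, and the paper's contribution is the much weaker Theorem~\ref{ARC}, which treats only the commutative Gorenstein case and even there requires the extra hypothesis that {\arc} already holds in codimension one. So there is no ``paper's own proof'' to compare your proposal against.

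You were right to flag this. Your outline is an honest survey of the standard reductions (basic connected algebra, indecomposable $M$, total reflexivity from $\Ext^{>0}_\Lambda(M,\Lambda)=0$, Tate-cohomology vanishing, AR-duality), and you correctly identify that the argument stalls without an auxiliary finiteness hypothesis. That is exactly the state of the art: the conjecture is open for general artin algebras, and the present paper does not claim otherwise. In particular, your closing remark that the plan ``succeeds under such a hypothesis---recovering, for instance, the commutative Gorenstein reduction that is the actual contribution of the present paper'' is accurate, though note that even in the Gorenstein case the paper does not prove {\arc} outright; it reduces the question to height-one primes via stable cohomology and a duality argument (Lemma~\ref{ARSdual} and Theorem~\ref{free}), which is a different mechanism from the AR-quiver/$\tau$-orbit strategy you sketch.

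In short: there is no gap to name beyond the one you already named, because the target is a conjecture and the paper does not purport to prove it.
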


This long-standing conjecture is called {\em the Auslander-Reiten conjecture}.
Auslander and Reiten \cite{AR} proved it for several classes of rings including rings of finite representation type and rings of radical square zero.
About two decades later, Auslander, Ding and Solberg \cite{ADS} investigated it for commutative rings.
They studied the following condition on a ring, not necessarily an artinian ring.

\medskip

\noindent
\arc\quad
Let $R$ be a commutative noetherian ring and $M$ a finitely generated $R$-module.
If $\Ext ^{i}_{R}(M,M\oplus R) = 0$ for all $i>0$, then $M$ is projective.

\medskip

\noindent
They proved that any complete intersection local ring $R$ satisfies this condition.
As a refinement of this result, Araya and Yoshino \cite{AY} showed that every local ring $R$ satisfies the condition {\arc} for $R$-modules $M$ of finite complete intersection dimension.

Conjecture \ref{ausrei} is also called {\em Tachikawa's conjecture} in the case where the ring $\Lambda$ is selfinjective, and it is itself an important problem.
The purpose of this note is to explore Conjecture \ref{ausrei} for Gorenstein rings, which are a common generalization of a complete intersection ring and a commutative selfinjective ring.
To be precise, a commutative noetherian ring $R$ is called {\em Gorenstein} if $R_\p$ has finite injective dimension as an $R_\p$-module for all prime ideals $\p$ of $R$.
The following theorem is the main result of this note.

\begin{thm}\label{ARC}
Let $R$ be a Gorenstein ring.
If $R_\p$ satisfies {\arc} for all prime ideals $\p$ of height at most one, then $R_\p$ satisfies {\arc} for all prime ideals $\p$.
\end{thm}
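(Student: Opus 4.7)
The plan is to reduce to the local case and induct on dimension. After localizing at the maximal ideal it suffices to show: if $(R, \m)$ is a Gorenstein local ring of dimension $d$ all of whose localizations $R_\p$ with $\Ht \p \leq 1$ satisfy \arc, then $R$ itself satisfies \arc. I proceed by induction on $d$. For $d \leq 1$ the conclusion is exactly the hypothesis.

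Assume $d \geq 2$ and let $M$ be a finitely generated $R$-module with $\Ext^i_R(M, M \oplus R) = 0$ for all $i > 0$. For any non-maximal prime $\p$ of $R$, the ring $R_\p$ is Gorenstein local of dimension strictly less than $d$, and every prime of $R_\p$ of height at most one is the localization of some prime of $R$ of height at most one; thus $R_\p$ inherits the codimension-one hypothesis, and the induction hypothesis applies to it. Since $\Ext$ commutes with localization, $M_\p$ satisfies the (ARC) hypothesis over $R_\p$, hence is free. Because $\Ext^i_R(M, R) = 0$ for all $i > 0$ and $R$ is Gorenstein, $M$ is maximal Cohen--Macaulay. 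Thus $M$ is an MCM module that is free on the punctured spectrum of $R$.

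The finishing step is Auslander--Reiten duality for MCM modules over a Gorenstein local ring. For such an $M$ (MCM and locally free on the punctured spectrum) one obtains
\[
    \lExt^{d-1}_R(M, M) \;\cong\; \D\,\lHom_R(M, M),
\]
with $\D$ the Matlis dual. This arises from the AR formula $\Ext^1_R(M, \tau M) \cong \D\,\lHom_R(M, M)$ together with the identification $\tau M \cong \Omega^{2-d} M$ in the stable category of MCM modules, valid because $R$ is Gorenstein (it is read off from the Auslander transpose after dualizing into $R$ and using that $R$ is its own canonical module). Since $d \geq 2$, the index $d-1$ is positive, and so $\lExt^{d-1}_R(M, M) = \Ext^{d-1}_R(M, M) = 0$ by assumption. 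The module $\lHom_R(M, M)$ has finite length because $M$ is free on the punctured spectrum, so Matlis duality forces $\lHom_R(M, M) = 0$; equivalently, $\operatorname{id}_M$ factors through a free module, and by locality $M$ is free.

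The main technical obstacle is installing the Auslander--Reiten duality in the form above without assuming that $R$ is Henselian or an isolated singularity; only the module $M$ is known to be locally free on the punctured spectrum. Establishing that the AR formula and the stable identification $\tau M \cong \Omega^{2-d} M$ hold at the level of this single MCM module under just the Gorenstein hypothesis on $R$ is the heart of the proof, after which vanishing of a single positive-degree $\Ext$ is enough to collapse $\underline{\End}(M)$ to zero.
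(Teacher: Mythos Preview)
Your proof is correct and essentially identical to the paper's: the paper reduces via a minimal bad prime (equivalent to your induction on dimension) to an MCM module that is locally free on the punctured spectrum, and then uses the same Auslander--Reiten--Serre duality $\lExt^{d-1}_R(M,M)\cong \Ext^d_R(\lHom_R(M,M),R)$---which, for the finite-length module $\lHom_R(M,M)$, is exactly your Matlis-dual formulation---to force $\lHom_R(M,M)=0$ and hence $M$ free. The duality you derive via $\tau M\cong\Omega^{2-d}M$ is recorded in the paper as Lemma~\ref{ARSdual}, with a citation to Yoshino for the proof.
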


Gorenstein normal rings, which are complete intersections in
codimension one, are classical and important rings in commutative
algebra and algebraic geometry.  One can find the details in \cite{Hoc} for
example.
Since any normal ring satisfies Serre's condition $(R_1)$ and any regular ring satisfies the condition \arc, we get the following result as a corollary.

\begin{cor}\label{normal}
Every Gorenstein normal ring satisfies the condition {\arc}.
\end{cor}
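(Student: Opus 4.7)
The plan is to reduce Corollary \ref{normal} to Theorem \ref{ARC} and then transfer \arc{} from all localizations of $R$ back to $R$ itself. First I would verify the height-at-most-one hypothesis of Theorem \ref{ARC}. Because $R$ is normal, Serre's condition $(R_1)$ makes $R_\p$ a regular local ring for every prime $\p$ with $\Ht \p \le 1$. Over a regular local ring $A$, every finitely generated module $M$ has finite projective dimension, and minimality of the free resolution forces $\Ext^{n}_{A}(M,A) \ne 0$ whenever $n = \mathrm{pd}_A M \ge 1$; consequently $\Ext^{i}_A(M,A) = 0$ for all $i > 0$ forces $M$ to be free. Regular local rings therefore satisfy \arc{} trivially, the hypothesis of Theorem \ref{ARC} is fulfilled, and we may conclude that $R_\p$ satisfies \arc{} for \emph{every} prime $\p$ of $R$.

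The remaining task is the descent from local to global. Given a finitely generated $R$-module $M$ with $\Ext^{i}_R(M, M \oplus R) = 0$ for all $i > 0$, localization of Ext (valid because $M$ is finitely generated over a noetherian ring) yields $\Ext^{i}_{R_\p}(M_\p, M_\p \oplus R_\p) = 0$ at every prime $\p$ and every $i > 0$. Applying \arc{} for $R_\p$, each $M_\p$ is free over $R_\p$; since $M$ is finitely presented and locally free on $\Spec R$, it is projective, establishing \arc{} for $R$.

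I do not anticipate a genuine obstacle: normality delivers regularity in codimension one, regular local rings trivially satisfy \arc, and the passage from local to global is the routine localization-commutativity of Ext. All the substantive work has already been accomplished in Theorem \ref{ARC}; the corollary is a bookkeeping consequence.
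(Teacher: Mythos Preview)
Your proposal is correct and follows the same approach as the paper: normality gives Serre's $(R_1)$, regular local rings satisfy \arc, and Theorem~\ref{ARC} finishes the argument. The paper's justification is the single sentence preceding the corollary; you have simply made explicit both the reason regular local rings satisfy \arc{} and the routine local-to-global descent (localization of $\Ext$ plus ``locally free $\Rightarrow$ projective''), which the paper leaves to the reader.
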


C. Huneke and G. J. Leuschke proved the following theorem \cite[Theorem 0.1]{HL}.

\begin{thm}[Huneke-Leuschke]\label{HL}
Let $R$ be a Cohen-Macaulay ring which is a quotient of a locally excellent ring $S$ of dimension $d$ by a locally regular sequence. Assume that $S$ is locally a complete intersection ring in codimension one, and further assume either that $S$ is Gorenstein, or that $S$ contains the field of rational numbers. Then $R$ satisfies {\arc}.
\end{thm}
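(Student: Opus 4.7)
The plan is to argue by induction on the height of $\p$. The base case $\Ht\p \leq 1$ is exactly the hypothesis, so fix $\p$ with $\Ht\p = n \geq 2$ and assume the conclusion for all primes of smaller height. Localizing at $\p$ reduces the problem to the following: $R$ is a Gorenstein local ring of dimension $n \geq 2$ such that $R_\q$ satisfies \arc\ for every non-maximal prime $\q$, and the goal is to show $R$ itself satisfies \arc. Given a finitely generated $M$ with $\Ext^i_R(M, M \oplus R) = 0$ for all $i > 0$, applying the inductive hypothesis to $M_\q$ at each non-maximal $\q$ shows that $M_\q$ is free, so $M$ is free on the punctured spectrum.

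Next I would pass to stable (Tate) cohomology. Because $R$ is Gorenstein and $\Ext^i_R(M, R) = 0$ for $i > 0$, the module $M$ has $G$-dimension zero, so $M$ is maximal Cohen-Macaulay and admits a complete resolution. The Tate cohomology $\lExt^i_R(M, M)$ is therefore defined for every $i \in \mathbb{Z}$ and agrees with $\Ext^i_R(M, M)$ for $i \geq 1$; in particular $\lExt^i_R(M, M) = 0$ for all $i > 0$. Moreover, the local freeness of $M$ on the punctured spectrum forces each $\lExt^i_R(M, M)$ to be supported only at $\m$ and hence to have finite length as an $R$-module.

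The crucial step is to invoke an Auslander-Reiten type duality for maximal Cohen-Macaulay modules locally free on the punctured spectrum over a Gorenstein local ring. After passing to the completion $\widehat R$, which is harmless since $\Ext$ commutes with flat base change and freeness descends under faithful flatness, this duality yields an isomorphism of the form
$$\lExt^{0}_R(M, M) \;\cong\; \D\bigl(\lExt^{d-1}_R(M, M)\bigr),$$
where $d = \dim R$ and $\D$ is the Matlis dual. Since $d \geq 2$, we have $d - 1 \geq 1$, so the right hand side equals $\D(\Ext^{d-1}_R(M, M)) = 0$. Hence $\lHom_R(M, M) = \lExt^0_R(M, M) = 0$, meaning $\id_M$ factors through a free module; this forces $M$ to be a direct summand of a free module, and therefore free.

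The main obstacle is establishing the Auslander-Reiten duality in the precise form used above for a general Gorenstein local ring. One must identify the Auslander-Reiten translate for MCM modules via the Auslander transpose and iterated syzygies, and then verify that the pairing exchanges degree $0$ with degree $d-1$ of the complete cohomology. The dimension condition $d \geq 2$ enters exactly here, which is why codimension one is the sharp boundary for the inductive hypothesis.
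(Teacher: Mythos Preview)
The paper does not prove Theorem~\ref{HL}; it is quoted from \cite[Theorem~0.1]{HL} as a known result, and the paper only remarks afterward that its own Theorem~\ref{ARC} recovers the special case $R=S$ (with $S$ Gorenstein) of Theorem~\ref{HL}, without the excellence hypothesis. So there is no ``paper's own proof'' to compare against here.

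What you have actually written is, almost verbatim, the paper's proof of Theorem~\ref{ARC}: the induction on height (equivalently, the paper's choice of a minimal element of the set $\Phi$ of bad primes), the reduction to a Gorenstein local ring of dimension $\ge 2$ with $M$ maximal Cohen--Macaulay and locally free on the punctured spectrum, and the use of Auslander--Reiten--Serre duality to get $\lHom_R(M,M)\cong \D(\lExt^{d-1}_R(M,M))=0$. This matches the paper's Lemma~\ref{ARSdual}, Theorem~\ref{free}, and Corollary~\ref{cor} exactly, so for the statement of Theorem~\ref{ARC} your argument is correct and is the same route the paper takes.

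As a proof of Theorem~\ref{HL} in its full generality, however, there are genuine gaps. First, the base case ``$\Ht\p\le 1$'' is \emph{not} literally the hypothesis: the hypothesis is that $S$ is locally a complete intersection in codimension one, and you must argue that this, together with $R$ being $S$ modulo a locally regular sequence, forces $R_\p$ to be a complete intersection (hence to satisfy \arc) for $\Ht\p\le 1$. Second, and more seriously, your entire argument assumes $R$ is Gorenstein (complete resolutions, Tate cohomology, the AR--Serre duality in the form you use). In Theorem~\ref{HL} the ring $R$ is only Cohen--Macaulay; $R$ is Gorenstein when $S$ is, but the alternative hypothesis ``$S$ contains the field of rational numbers'' does not yield Gorensteinness, and your method says nothing about that branch. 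The paper itself explicitly notes that Theorem~\ref{ARC} only recaptures the case $R=S$ of Theorem~\ref{HL}, so you should not expect this approach to give the full Huneke--Leuschke result.
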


Since complete intersections satisfy {\arc}, Theorem \ref{ARC} actually shows that every Gorenstein ring which is a complete intersection in codimension one satisfies {\arc}, and this does recover the special case of $R = S$ in Theorem \ref{HL}, and without having to assume that $S$ is locally excellent.

In the next section we shall prove Theorem \ref{ARC}.
Our main instruments in the proof are stable cohomology modules.

%%%%%%%%%%%%%%%%%%%%%%%%%%%%%%%%%%%%%%%%%%%%%%%%%%%%
\section{Proof of Theorem \ref{ARC}}

To prove Theorem \ref{ARC}, notice that it is enough to show the local case.
Throughout the rest of this note, let $R$ be a commutative Gorenstein local ring of Krull dimension $d$ with maximal ideal $\m$.
We begin with recalling the definition of a stable cohomology module.

\begin{defn}
Let $M$ and $N$ be maximal Cohen-Macaulay $R$-modules.
We set $\lHom_R(M,N)= \Hom _R(M,N)/\P_R(M,N)$, where $\P_R(M,N)$ is the $R$-submodule of $\Hom_R(M,N)$ consisting of all homomorphisms from $M$ to $N$ factoring through some finitely generated free $R$-module.
For each integer $i$, we define {\em the $i$-th stable cohomology module} by $\lExt^i_R(M,N) =\lHom_R(\Omega ^i M,N)$.
\end{defn}

We remark that, since $R$ is Gorenstein, every maximal Cohen-Macaulay
$R$-module $M$ admits a {\em complete resolution}, namely, there
exists an exact sequence $(F_\bullet,d_\bullet)$ of finitely generated
free $R$-modules with $M=\operatorname{Im}d_0$:

$$
\begin{picture}(275,50)
\put (0,31){$\cdots$}
\put (16,34){\vector(1,0){36}}
\put (67,34){\vector(1,0){36}}
\put (118,34){\vector(1,0){36}}
\put (175,34){\vector(1,0){36}}
\put (233,34){\vector(1,0){36}}
\put (271,31){$\cdots$}
\put (53,30){$F_1$}
\put (104,30){$F_0$}
\put (155,30){$F_{-1}$}
\put (212,30){$F_{-2}$}

\put (119,29){\vector(1,-1){12}}
\put (142,17){\vector(1,1){12}}
\put (129,8){$M$}

\put (29,40){$d_{2}$}
\put (80,40){$d_{1}$}
\put (131,40){$d_{0}$}
\put (187,40){$d_{-1}$}
\put (245,40){$d_{-2}$}
\end{picture}
$$

\noindent
Then $\Omega^iM$ is defined as the image of the map $d_i$ for every integer $i$.
For the details of stable cohomology modules, see [5] for example.

We make a list of several basic properties of $\lHom$ and $\lExt$.
The details can be found, for instance, in \cite[\S 7]{T}.

\begin{prop}
Let $M,N$ be maximal Cohen-Macaulay $R$-modules.
\begin{enumerate}[\rm (1)]
\item
One has $\lExt ^0_R(M,N) = \lHom _R(M,N)$.
\item
For integers $i$, $m$ and $n$, one has $\lExt ^i_R(M,N) \cong \lExt ^{i-m+n}_R(\Omega^{m}M,\Omega^{n}N)$.
In particular, $\lExt ^i_R(M,N) \cong \Ext ^i_R(M,N)$ for all positive integers $i$.
\item
One has $\lHom_R(M,M)=0$ if and only if $M$ is free.
\end{enumerate}
\end{prop}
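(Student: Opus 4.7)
The plan is to verify the three assertions in sequence; each follows fairly directly once the definition is unpacked and we exploit the fact that $R$ is Gorenstein (so MCM modules admit complete resolutions and satisfy $\Ext^{\ge 1}_R(-,R)=0$ by local duality). Part (1) is immediate, since the complete resolution is chosen so that $\Omega^0 M=M$.

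For part (2) I would handle the two shifts separately. For the first variable, a complete resolution of $M$ is, up to truncation and reindexing, also a complete resolution of $\Omega^m M$; hence $\Omega^{i-m}(\Omega^m M)$ and $\Omega^i M$ agree up to free summands, which are invisible to $\lHom$. This gives
$$\lExt^i_R(M,N)=\lHom_R(\Omega^iM,N)\cong\lHom_R(\Omega^{i-m}(\Omega^m M),N)=\lExt^{i-m}_R(\Omega^m M,N).$$
For the second variable, I would apply the long exact sequence of stable cohomology (proved in \cite[\S 7]{T}) to the short exact sequence $0\to\Omega N\to F\to N\to 0$ of MCM modules extracted from a complete resolution of $N$. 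Because any map into or out of a free module factors through that module itself, $\lExt^j_R(-,F)=0$ for every $j$, so the connecting morphism yields $\lExt^i_R(X,N)\cong\lExt^{i+1}_R(X,\Omega N)$; iterating and combining with the first-variable shift yields the full formula. For the ``in particular'' clause, dimension shifting for ordinary Ext gives
$$\Ext^i_R(M,N)\cong\Hom_R(\Omega^iM,N)/\operatorname{im}\Hom_R(F_{i-1},N)$$
for $i\ge 1$, so it suffices to identify this image with $\P_R(\Omega^iM,N)$. The inclusion $\subseteq$ is clear; for $\supseteq$, given $f=h\circ g$ with $g\colon\Omega^iM\to G$ and $G$ a finitely generated free module, the vanishing $\Ext^1_R(\Omega^{i-1}M,G)=0$---which follows from $\Ext^{\ge 1}_R(\Omega^{i-1}M,R)=0$ since $\Omega^{i-1}M$ is MCM over the Gorenstein ring $R$---lets me extend $g$ to a map $F_{i-1}\to G$, and composing with $h$ realizes $f$ in the image.

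Part (3) is short: if $M$ is free, every endomorphism factors through $M$ itself, so $\P_R(M,M)=\Hom_R(M,M)$ and $\lHom_R(M,M)=0$; conversely, if $\lHom_R(M,M)=0$, then $\id_M\in\P_R(M,M)$ factors through some finitely generated free module $F$, exhibiting $M$ as a direct summand of $F$, hence $M$ is projective and, $R$ being local, free. The step I expect to be the main obstacle is the identification $\P_R(\Omega^iM,N)=\operatorname{im}\Hom_R(F_{i-1},N)$ in part (2): this is the only place where the Gorenstein hypothesis enters essentially, and care is needed to invoke the MCM property of every syzygy that appears.
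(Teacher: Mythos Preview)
Your argument is correct in all three parts. The paper, however, does not prove this proposition at all: immediately before the statement it says ``The details can be found, for instance, in \cite[\S 7]{T}'', and no proof is given. So your route is genuinely different in that you actually supply the argument rather than defer to the literature.

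A brief comparison: your treatment of (1) and (3) is the standard unpacking of the definition and needs no outside input. For (2), your handling of the first-variable shift is again straight from the definition (syzygies read off a single complete resolution), while for the second-variable shift you invoke the long exact sequence of stable cohomology from \cite[\S 7]{T}---the very source the paper cites for the whole proposition---so your proof is not fully independent of that reference. An alternative, equally short, is to use that the stable category of MCM modules over a Gorenstein local ring is triangulated with shift $\Omega^{-1}$, so $\lHom_R(\Omega^i M,N)\cong\lHom_R(\Omega^{i-n}M,\Omega^{-n}N)$ directly; this avoids appealing to the long exact sequence but uses essentially the same underlying structure. Your identification $\P_R(\Omega^iM,N)=\operatorname{im}\Hom_R(F_{i-1},N)$ for the ``in particular'' clause is exactly the right point to isolate, and your use of $\Ext^1_R(\Omega^{i-1}M,R)=0$ (valid because every syzygy of an MCM module over a Gorenstein ring is again MCM) is the correct justification. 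What your approach buys is a self-contained account suitable for a reader unfamiliar with \cite{T}; what the paper's approach buys is brevity, since these facts are standard in the literature on Tate/stable cohomology.
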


We say that a finitely generated $R$-module $M$ is a {\em vector bundle} if $M_\p$ is a free $R_\p$-module for all nonmaximal prime ideals $\p$.

\begin{thm}\label{free}
Let $M$ be a maximal Cohen-Macaulay $R$-module which is a vector bundle.
If $\lExt ^{d-1}_R (M,M) =0$, then $M$ is free. 
\end{thm}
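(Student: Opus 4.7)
The plan is to invoke Auslander--Reiten duality on the stable category of maximal Cohen--Macaulay $R$-modules: for MCM modules $M, N$ over the $d$-dimensional Gorenstein local ring $R$, with at least one of them locally free on the punctured spectrum, there is a natural isomorphism
\[
\lHom_R(M,N) \;\cong\; \Hom_R\!\bigl(\lExt^{\,d-1}_R(N,M),\, E_R(R/\m)\bigr),
\]
which is the stable Gorenstein form of Serre duality; see e.g.~\cite[\S7]{T}. Setting $N = M$ and substituting the hypothesis $\lExt^{\,d-1}_R(M,M)=0$ immediately yields $\lHom_R(M,M) = 0$, and part~(3) of the preceding Proposition then forces $M$ to be free.

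Before invoking the duality I would verify that both sides are finite-length modules, so that the outer Matlis dual behaves faithfully. Since $M_\p$ is $R_\p$-free for every non-maximal prime $\p$ and the submodule $\mathcal{P}_R(M,M)\subseteq \Hom_R(M,M)$ of morphisms factoring through a free module localizes compatibly (as $M$ is finitely generated), we have $\lHom_R(M,M)_\p = 0$ for all non-maximal $\p$, so $\lHom_R(M,M)$ is finitely generated and supported only at $\m$, hence of finite length; the same reasoning applies to $\lExt^{\,d-1}_R(M,M)$.

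The main obstacle is citing or proving the Auslander--Reiten duality isomorphism in the stated form; in the Gorenstein case it can be extracted from the local duality theorem applied to a complete resolution of $M$, using that $\omega_R = R$. Once that duality is in hand, the rest of the argument is essentially formal, and the role of the vector-bundle hypothesis is precisely to keep everything in sight at finite length.
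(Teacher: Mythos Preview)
Your proposal is correct and follows essentially the same line as the paper's proof. The paper uses the duality in the form $\Ext^{d}_R(\lHom_R(N,M),R)\cong\lExt^{d-1}_R(M,N)$ (its Lemma~\ref{ARSdual}) together with the local-duality isomorphism $\lHom_R(M,M)\cong\Ext^{d}_R(\Ext^{d}_R(\lHom_R(M,M),R),R)$ for the finite-length module $\lHom_R(M,M)$; your Matlis-dual formulation $\lHom_R(M,N)\cong\Hom_R(\lExt^{d-1}_R(N,M),E_R(R/\m))$ is the same statement after identifying $\Ext^{d}_R(-,R)$ with $\Hom_R(-,E_R(R/\m))$ on finite-length modules, and you use the vector-bundle hypothesis in exactly the same way.
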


The lemma below, which is a kind of the Auslander-Reiten-Serre duality theorem, is necessary to prove this theorem.
The Auslander-Reiten-Serre duality was essentially given by Auslander \cite[I.8.8,III.1.8]{Aus}.

\begin{lem}\label{ARSdual}
Let $M,N$ be maximal Cohen-Macaulay $R$-modules which are vector bundles.
Then we have an isomorphism
$$
\Ext ^{d}_R(\lHom_R(N,M), R) \cong \lExt ^{d-1}_R(M, N).
$$
\end{lem}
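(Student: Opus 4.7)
The plan is to prove the lemma in three steps: first, reduce the left-hand side to a Matlis-dual computation by showing that $\lHom_R(N,M)$ has finite length; second, apply local duality; and third, invoke a form of Auslander--Reiten--Serre duality for the final identification.

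I would first observe that $L := \lHom_R(N,M)$ has finite length. Since $M$ is a vector bundle, $M_\p$ is free for every non-maximal prime $\p$, so every $R_\p$-linear map $N_\p \to M_\p$ factors through the free module $M_\p$ and represents zero in $\lHom_{R_\p}(N_\p, M_\p)$. Because $N$ is finitely generated, localization commutes with $\lHom$, yielding $\lHom_R(N,M)_\p = 0$ for all $\p \neq \m$. Combined with the finite generation of $L$, this gives that $L$ has finite length.

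I would next apply local duality. For the Gorenstein local ring $R$ of dimension $d$ and any finite length module $L$, one has
$$\Ext^d_R(L, R) \;\cong\; \Hom_R(L, E_R(R/\m)).$$
This is a classical consequence of local duality (with $R$ itself serving as the dualizing module), and can also be proved directly by induction on the length of $L$, starting from the base case $L = R/\m$, where $\Ext^d_R(R/\m, R) \cong R/\m \cong \Hom_R(R/\m, E_R(R/\m))$.

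The lemma now reduces to establishing a natural isomorphism
$$\Hom_R\bigl(\lHom_R(N,M),\, E_R(R/\m)\bigr) \;\cong\; \lExt^{d-1}_R(M,N).$$
Using the shift property $\lExt^{d-1}_R(M,N) \cong \lHom_R(\Omega^{d-1}M, N)$ of the preceding proposition, this becomes a form of the Auslander--Reiten--Serre duality of \cite{Aus}. My approach would be to use complete resolutions of $M$ and $N$ to build a natural composition pairing in the stable category, and then verify its perfectness by reducing modulo a maximal $R$-regular sequence and invoking classical Matlis duality over the resulting artinian Gorenstein quotient. The main obstacle is precisely this last step: the finite-length reduction and local duality identifications are essentially formal, whereas setting up the AR--Serre pairing and verifying its non-degeneracy constitutes the substantive content of the lemma.
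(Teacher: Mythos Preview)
Your approach is sound but diverges from the paper in an interesting way. The paper's own proof is a single sentence: it simply cites \cite[Lemma~(3.10)]{Y} (Yoshino's formulation of Auslander--Reiten duality for maximal Cohen--Macaulay modules) and is done. What you have written is essentially an outline of \emph{how one proves} that cited result, rather than a different argument.

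Two remarks on the comparison. First, your Steps~1 and~2 (finite length of $\lHom_R(N,M)$ and local duality) are not needed for the lemma as stated in the paper; in fact the paper invokes exactly those two ingredients \emph{after} the lemma, inside the proof of Theorem~\ref{free}. You have effectively absorbed part of that later argument into the lemma, which is a harmless reorganization but worth being aware of. Second, your Step~3 is the real content, and you correctly flag it as such. The strategy you propose --- setting up a pairing via complete resolutions and checking non-degeneracy by reducing modulo a maximal regular sequence to the artinian Gorenstein case --- is viable and is essentially how results like \cite[Lemma~(3.10)]{Y} are established; but as you acknowledge, you have sketched rather than executed it. So your proposal is not wrong, but it trades a one-line citation for an outline whose hard step remains unfilled. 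If you intend a self-contained proof, that step needs to be carried out; otherwise, citing \cite{Y} or \cite{Aus} directly (as the paper does) is cleaner.
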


\begin{proof}
It follows from \cite[Lemma (3.10)]{Y}.
\end{proof}

\begin{pff}
Since $M$ is a vector bundle, we have $\lHom_R(M,M)_\p \cong \lHom_{R_\p}(M_\p,M_\p)=0$ for every nonmaximal prime ideal $\p$.
Hence the $R$-module $\lHom _R(M,M)$ has finite length.
The local duality and Lemma \ref{ARSdual} yield isomorphisms $\lHom _R(M,M)\cong \Ext ^{d}_R(\Ext ^{d}_R(\lHom _R(M,M),R),R)\cong \Ext ^{d}_R(\lExt ^{d-1}_R(M,M),R)$.
Therefore if $\lExt ^{d-1}_R(M,M)=0$, then $M$ is free.
\qed
\end{pff}

As an immediate consequence of Theorem \ref{free}, we obtain the following result.

\begin{cor}\label{cor}
Assume $d\ge 2$.
Let $M$ be a maximal Cohen-Macaulay $R$-module which is a vector bundle.
If $\Ext ^{d-1}_R (M,M) =0$, then $M$ is free.
\end{cor}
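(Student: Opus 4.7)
The plan is to deduce this directly from Theorem \ref{free} by replacing ordinary $\Ext$ with stable cohomology. Recall that Theorem \ref{free} gives the desired conclusion (that $M$ is free) under the weaker-looking hypothesis $\lExt^{d-1}_R(M,M)=0$, so it suffices to show that the ordinary vanishing $\Ext^{d-1}_R(M,M)=0$ forces the stable vanishing.

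This is handled by part (2) of the preceding proposition, which records the comparison $\lExt^{i}_R(M,N)\cong\Ext^{i}_R(M,N)$ whenever $i$ is a positive integer. The hypothesis $d\ge 2$ ensures that $d-1\ge 1$ is strictly positive, so the comparison applies with $i=d-1$ and $N=M$, giving
\[
\lExt^{d-1}_R(M,M)\;\cong\;\Ext^{d-1}_R(M,M)\;=\;0.
\]
Then Theorem \ref{free}, whose other hypotheses ($M$ maximal Cohen-Macaulay and a vector bundle) are exactly the assumptions of the corollary, yields that $M$ is free.

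There is essentially no obstacle: once one recognizes that the role of the hypothesis $d\ge 2$ is precisely to place the degree $d-1$ in the range where stable and ordinary $\Ext$ agree, the corollary is immediate. The only point deserving a word of care is making sure we invoke the correct clause of the proposition (the positivity of $i=d-1$), and there is nothing else to verify.
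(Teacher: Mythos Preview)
Your argument is correct and matches the paper's intent: the paper simply states that the corollary is an immediate consequence of Theorem \ref{free}, and your use of the isomorphism $\lExt^{d-1}_R(M,M)\cong\Ext^{d-1}_R(M,M)$ for $d-1>0$ from the proposition is exactly the step that makes it immediate.
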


%This result says that the freeness of $M$ is dominated just by the vanishing of $\Ext_R^{d-1}(M,M)$.

The following example says that in Theorem \ref{free} and Corollary \ref{cor} the assumption that $M$ is a vector bundle cannot be removed.

\begin{ex}
Let $k$ be a field.
Set $R=k[[x,y,z]]/(xy)$ and $M=R/(x)$.
Then $R$ is a Gorenstein local ring of Krull dimension $2$ and $\Ext ^{2-1}_R(M,M)=0$, but $M$ is not free.
\end{ex}

Now let us prove our main theorem.

\begin{pfa}
Let $\Phi$ be the set of prime ideals $\p$ of $R$ such that $R_\p$ does not satisfy the condition {\arc}.
Assume that $\Phi$ is nonempty, and take a minimal element $\q$ of $\Phi$.
Replacing $R$ with $R_\q$, we may assume that $R$ is a Gorenstein local ring of dimension $d\ge 2$ which does not satisfy {\arc} such that $R_\p$ satisfies {\arc} for all nonmaximal prime ideal $\p$.
Then there exists a nonfree finitely generated $R$-module $M$ such that $\Ext ^{i}_R(M, M\oplus R)=0$ for all $i>0$.
As $\Ext ^{i}_R(M, R)=0$ for $i>0$, $M$ is maximal Cohen-Macaulay.
For any nonmaximal prime ideal $\p$, we have $\Ext ^{i}_{R_\p}(M_\p, M_\p\oplus R_\p)=0$ for every $i>0$ and $R_\p$ satisfies {\arc}, so the $R_\p$-module $M_\p$ is free.
Hence $M$ is a vector bundle.
Since $d-1>0$, we have $\Ext ^{d-1}_R(M,M)=0$.
Corollary \ref{cor} implies that $M$ is a free $R$-module.
Thus we get a contradiction, and the set $\Phi$ must be empty.
\qed
\end{pfa}

%%%%%%%%%%%%%%%%%%%%%%%%%%%%%%%%%%%%%%%%%%%%%%%%%%%
\section*{Acknowledgments}
The author would like to express his deep gratitude to Ryo Takahashi, Lars Winther Christensen and Yuji Yoshino, who gave him a lot of valuable comments and suggetions.

%%%%%%%%%%%%%%%%%%%%%%%%%%
%%%% REFERENCES 

 \bibliographystyle{amsplain} 
 
\ifx\undefined\bysame 
\newcommand{\bysame}{\leavevmode\hbox to3em{\hrulefill}\,} 
\fi

%%%%%%%%%%%%% Please send until here %%%%%%%%%%%%%%%%%%%%

\end{document}